\newcommand{\abs}[1]{\lvert#1\rvert}
\newcommand{\C}{\mathbb{C}}
\newcommand{\bbC}{\mathbb{C}}
\newcommand{\ot}{\otimes}
\newcommand{\la}{\lambda}
\newcommand{\GL}{\text{GL}}
\theoremstyle{definition}
\newtheorem{proposition}{Theorem}
\newtheorem{lemma}{Lemma}
\theoremstyle{remark}
\newtheorem*{example}{Example}
\begin{document}

\title{Branching from the General Linear Group to the Symmetric Group and the Principal Embedding}

\author{Alexander Heaton, Songpon Sriwongsa, Jeb F. Willenbring}

\maketitle

\begin{abstract}
    Let $S$ be a principally embedded $\mathfrak{sl}_2$-subalgebra in $\mathfrak{sl}_n$ for $n \geq 3$. A special case of results of the third author and Gregg Zuckerman implies that there exists a positive integer $b(n)$ such that for any finite-dimensional irreducible $\mathfrak{sl}_n$-representation, $V$, there exists an irreducible $S$-representation embedding in $V$ with dimension at most $b(n)$. In a 2017 paper (joint with Hassan Lhou), they prove that $b(n)=n$ is the sharpest possible bound, and also address embeddings other than the principal one.\\
    
    These results concerning embeddings may by interpreted as statements about \emph{plethysm}. Then, in turn, a well known result about these plethysms can be interpreted as a ``branching rule''.  Specifically, a finite dimensional irreducible representation of $\GL(n,\bbC)$ will decompose into irreducible representations of the symmetric group when it is restricted to the subgroup consisting of permutation matrices. The question of which irreducible representations of the symmetric group occur with positive multiplicity is the topic of this paper, applying the previous work of Lhou, Zuckerman, and the third author.
\end{abstract}

A complex irreducible representation $V$ of $\mathfrak{sl}_2(\C)$ defines a homomorphism
\begin{equation*}
    \pi: \mathfrak{sl}_2 \rightarrow \mbox{End}(V).
\end{equation*}
Fixing an ordered basis we obtain an identification $\text{End}(V) \cong \mathfrak{gl}_n$. Since $\mathfrak{sl}_2$ is a simple Lie algebra, the kernel is trivial and the image of $\pi$, denoted $\mathfrak{s}$, is therefore isomorphic to $\mathfrak{sl}_2$. We will refer to $\mathfrak{s}$ as a \emph{principal} $\mathfrak{sl}_2$-subalgebra of $\mathfrak{gl}_n$. In fact, since $\mathfrak{s}$ is simple it intersects the center of $\mathfrak{gl}_n$ trivially and hence $\mathfrak{s} \subseteq \mathfrak{sl}_n$ (except when $n=1$). There are other embeddings of $\mathfrak{sl}_2$ when $V$ is not irreducible, but we will only discuss the principal embedding in this paper.

Restricting the adjoint representation of a simple Lie algebra to a \emph{principal} $\mathfrak{sl}_2$-embedding, we can decompose and find multiplicities. In 1958, Bertram Kostant interpreted these multiplicities topologically in \cite{Kostant}, yielding the Betti numbers of the compact form of the corresponding Lie group. People have been interested in the principal embedding ever since. In future work we hope to consider the analogs of our results for other Lie types and other embeddings. In this paper we show a relationship between the principal embedding and branching from $GL_n$ to the symmetric group. Our main tool is the following theorem proved in \cite{LhouWillenbring} which was anticipated in \cite{WillenbringZuckerman}:
\begin{proposition}
    Fix $n \geq 3$ and a principal $\mathfrak{sl}_2$-subalgebra, $\mathfrak{s}$, of $\mathfrak{sl}_n$. Let $V$ denote an arbitrary finite dimensional complex irreducible representation of $\mathfrak{sl}_n$.  Then, there exists $0 \leq d < n$ such that upon restriction to $\mathfrak{s}$, V contains the irreducible $\mathfrak{s}$ representation $F_d$ in the decomposition.
\end{proposition}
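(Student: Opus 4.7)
The plan is to reduce the claim to a weight-multiplicity statement for the principal Cartan element $h \in \mathfrak{s}$, and then obstruct the hypothesis that every $\mathfrak{s}$-summand of $V$ has dimension at least $n+1$.

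Since $\mathfrak{s}$ is reductive, the restriction decomposes completely as $V|_{\mathfrak{s}} \cong \bigoplus_{d \geq 0} m_d F_d$. Writing $c_k := \dim V^{(k)}$ for the multiplicity of the $h$-weight $k$ in $V$, one has the standard $\mathfrak{sl}_2$-identity $m_d = c_d - c_{d+2}$, so the theorem is equivalent to the strict inequality $c_d > c_{d+2}$ holding for some $0 \leq d < n$. Under the principal embedding, $h$ acts on the defining representation as $\mathrm{diag}(n-1, n-3, \ldots, -(n-1))$, so the generating function
\[
\chi_V(q) \;=\; \sum_k c_k q^k \;=\; s_\lambda\bigl(q^{n-1},\, q^{n-3},\, \ldots,\, q^{-(n-1)}\bigr)
\]
is the principal specialization of the Schur polynomial at the $\mathfrak{sl}_n$-highest weight $\lambda$ of $V$. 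The trivial case $\lambda = 0$ gives $V \cong F_0$ outright, so henceforth we may assume $\lambda$ is a nonzero partition.

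I would proceed by contradiction, assuming $c_d = c_{d+2}$ for all $0 \leq d < n$. Chaining these equalities gives that the sequence $c_k$ (for $k \geq 0$) is constant across an initial segment of length roughly $n$ within each parity class; moreover all $h$-weights of $V$ share the parity $\lambda(h) \bmod 2$, so only one parity chain is nontrivial. Multiplying by $(1 - q^2)$ translates this flatness into the vanishing of the Laurent polynomial $(1 - q^2)\chi_V(q)$ on a symmetric interval of width comparable to $n$ about zero. I would then combine this structural constraint with the Weyl product formula
\[
\chi_V(q) \;=\; \prod_{1 \leq i < j \leq n} \frac{[\lambda_i - \lambda_j + j - i]_q}{[j-i]_q}, \qquad [m]_q := \frac{q^m - q^{-m}}{q - q^{-1}},
\]
and derive a contradiction by tracking the low-order coefficients: the numerator factors have degrees controlled by $\lambda$ and, for nonzero $\lambda$, cannot conspire to cancel the contributions near $q^0$ inherited from the denominator, forcing some $c_d > c_{d+2}$ with $d < n$.

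The main obstacle is this final combinatorial step of coefficient tracking, which is the technical heart of \cite{LhouWillenbring}; equivalently, one must perform a careful accounting in the Kostant partition function for the principal $\mathfrak{sl}_2$ to rule out the hypothesized flatness. Sharpness is witnessed by the defining representation $V \cong \C^n = F_{n-1}$, where $d = n-1$ is the smallest index of an appearing $\mathfrak{s}$-summand, so any proof must engage the rank $n$ in an essential way.
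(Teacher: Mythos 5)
There is a genuine gap: what you have written is a correct reduction, not a proof. The translation of the statement into the inequality $c_d > c_{d+2}$ for some $0 \leq d < n$ (via $m_d = c_d - c_{d+2}$ and the principal specialization $\chi_V(q) = s_\lambda(q^{n-1}, q^{n-3}, \ldots, q^{-(n-1)})$) is standard and sound, but the entire content of the theorem is the final step you defer --- ruling out the flatness $c_d = c_{d+2}$ for all $d < n$ --- and you explicitly leave it as ``the main obstacle.'' The paper itself does not reprove this statement; it quotes it as the main theorem of \cite{LhouWillenbring}, and that is exactly the part your sketch does not supply. A plan that says ``derive a contradiction by tracking the low-order coefficients'' without exhibiting the mechanism is not yet an argument.

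Moreover, the specific heuristic you propose is unreliable as stated. Your own sharpness example already defeats a ``contributions near $q^0$ cannot cancel'' strategy: for the defining representation $\lambda = (1,0,\ldots,0)$ the principal specialization is $q^{n-1} + q^{n-3} + \cdots + q^{-(n-1)}$, so $c_d = c_{d+2}$ for every admissible $d$ in the window except $d = n-1$, where the unique strict drop occurs. Thus the hypothesized flatness can persist throughout $0 \leq d \leq n-2$, and any successful argument must detect a drop possibly only at the very top of the window $d = n-1$; nothing ``near $q^0$'' forces it. One-row shapes more generally give Gaussian binomial coefficients, whose long flat stretches show that the needed inequality is a delicate boundary phenomenon tied to the rank $n$, which is precisely why the result required the analysis in \cite{LhouWillenbring} rather than a low-order coefficient count of the Weyl product formula.
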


The expository aspects of this paper should be put in context by mentioning some previous work, both old and new. Certainly, any work related to branching rules has benefited from the older extensive work of R. C. King, specifically \cite{King1974}. From another point of view, a well-known approach is to study the action of a Weyl group on the weight spaces in a finite dimensional irreducible representation of the corresponding Lie algebra. The special case of the zero weight space is of particular interest and addressed in \cite{Nishiyama} for the case of the symmetric group. More recently, the combinatorics of this problem are related to the stable Kronecker coefficients in \cite{OrellanaZabrocki2019}.

\textbf{Structure of this paper}: In this paper, we try to make some progress toward understanding the \textit{branching problem}: Can we describe how representations of $GL_n$ decompose upon restriction to the permutation matrices $\mathfrak{S}_n$? We attack the branching problem by realizing its equivalence to certain instances of \emph{plethysm}. Section \ref{sec:algorithm} describes a well-known algorithm that allows us to compute these branching multiplicities in any specific case. Section \ref{sec:motivation} provides some motivation by connecting branching with dynamical systems. Section \ref{sec:ConnectBranchingPlethysm} explains the connection between plethysm and branching. Section \ref{sec:OneRow} gives a known combinatorial description of branching for one-row shapes (symmetric powers). Finally, Section \ref{sec:finishProof} proves our main Theorem \ref{thm:Main}, which guarantees the existence of all $\mathfrak{S}_n$ irreducible representations inside certain two-row irreducible representations of $GL_n$. In the next section we provide a few definitions and notation followed by brief and explicit examples of the results of this paper, including a statement of the main theorem.

\section{Notation, brief examples, and main theorem}

We define a partition $\lambda$ of a nonnegative integer $a$ as a sequence $(\la_1,\dots,\la_k) \in \mathbb{N}^k$, satisfying $\la_1 \geq \cdots \geq \la_k > 0$ and $\sum \la_i = a$. We say such a $\la$ has $k$ \textit{parts} and \textit{size} $a$, writing $\ell(\la)=k$ and $|\la|=a$. Any $\la_i=0$ is considered irrelevant, so we could identify $\la$ with the infinite sequence $(\la_1,\dots,\la_k,0,0,\dots)$. For a partition $\la$ with at most $n$ parts let
$F^{\la}_n$ be the irreducible $GL_n$ representation with highest
weight indexed by $\la$. Throughout the paper we will sometimes write \textit{irrep} instead of \textit{irreducible representation}. If $\la$ has size $s$, let $Y^{\la}_s$ denote the irreducible complex representation of the symmetric group, $\mathfrak{S}_s$,
paired with $F^\la_n$ by Schur-Weyl duality (see for example \cite{Etingof} or \cite{Weyl1}) so that
\begin{equation*}
    {\bigotimes}^s \bbC^n \cong \bigoplus F^\la_n \ot Y^\la_s
\end{equation*}
where the sum is over all partitions of $s$ with at most $n$ parts (the symmetric group action commutes with the $GL_n$ diagonal action and the decomposition is multiplicity-free). We take this as our definition of $Y^\la_s$. The following example illustrates our main result.

\begin{example}
    Consider the symmetric group on 10 letters $\mathfrak{S}_{10}$. It's $42$ irreps $Y^\mu_{10}$ are in correspondence with partitions $\mu$ of size 10. Our main result shows that every irrep $Y_{10}^{\mu}$ has non-zero multiplicity in the decomposition of certain two-row partitions $\lambda$ of $10m$, for $m \in \{2,3,\dots\}$. %In this paper we will have results for each choice of $m \in \{2,3,\dots\}$.
    Choosing $m=2$ our results state that every irrep $Y^\mu_{10}$ appears in at least one of the following two irreps of $GL_{10}$:
    \begin{equation*}
        F_{10}^{(10,10)} \mbox{ or } F_{10}^{(11,9)}
    \end{equation*}
    Choosing $m=3$, we can guarantee that every irrep $Y^\mu_{10}$ of the symmetric group occurs in one of the following irreps of $GL_{10}$:
    \begin{equation*}
        F_{10}^{(15, 15)} \mbox{ or } F_{10}^{(16, 14)}
    \end{equation*}
    Choosing $m=4$ we find that every $Y^\mu_{10}$ appears with non-zero branching multiplicity inside at least one of
    \begin{equation*}
        F_{10}^{(20, 20)} \mbox{ or } F_{10}^{(21, 19)} \mbox{ or } F_{10}^{(22, 18)}.
    \end{equation*}
\end{example}

Given any irrep of $\mathfrak{S}_n$, we guarantee its non-zero multiplicity in certain \emph{short-tail} two-row irreps of the general linear group $GL_n$. The main Theorem \ref{thm:Main} reads as follows:

\textbf{Main Theorem}: Choose any irreducible representation $Y^\mu_n$ of $\mathfrak{S}_n$ and choose any $m \in \{2,3,4,\dots\}$. Consider the set of irreducible representations of $GL_n$ denoted $F^\la_n$ where $\la = (p+d,p)$, where $p,d$ are integer solutions of $2p+d=nm$ restricted to $d \in \{0,1,2,\dots,m\}, p \in \mathbb{N}$. Then the multiplicity
    \begin{equation*}
        \left[ Y^\mu_n : F^\la_n \right] \ne 0
    \end{equation*}
for at least one of the $F^\la_n$. 

\textbf{Remark:} Taking $m=2$ these irreps come near the boundary of a certain interesting phenomenon which we do not yet understand, and which our theorem does not explain. Consider Figure \ref{fig:curiousCurve}.
\begin{figure}[ht]
    \centering
    \includegraphics[width=\textwidth]{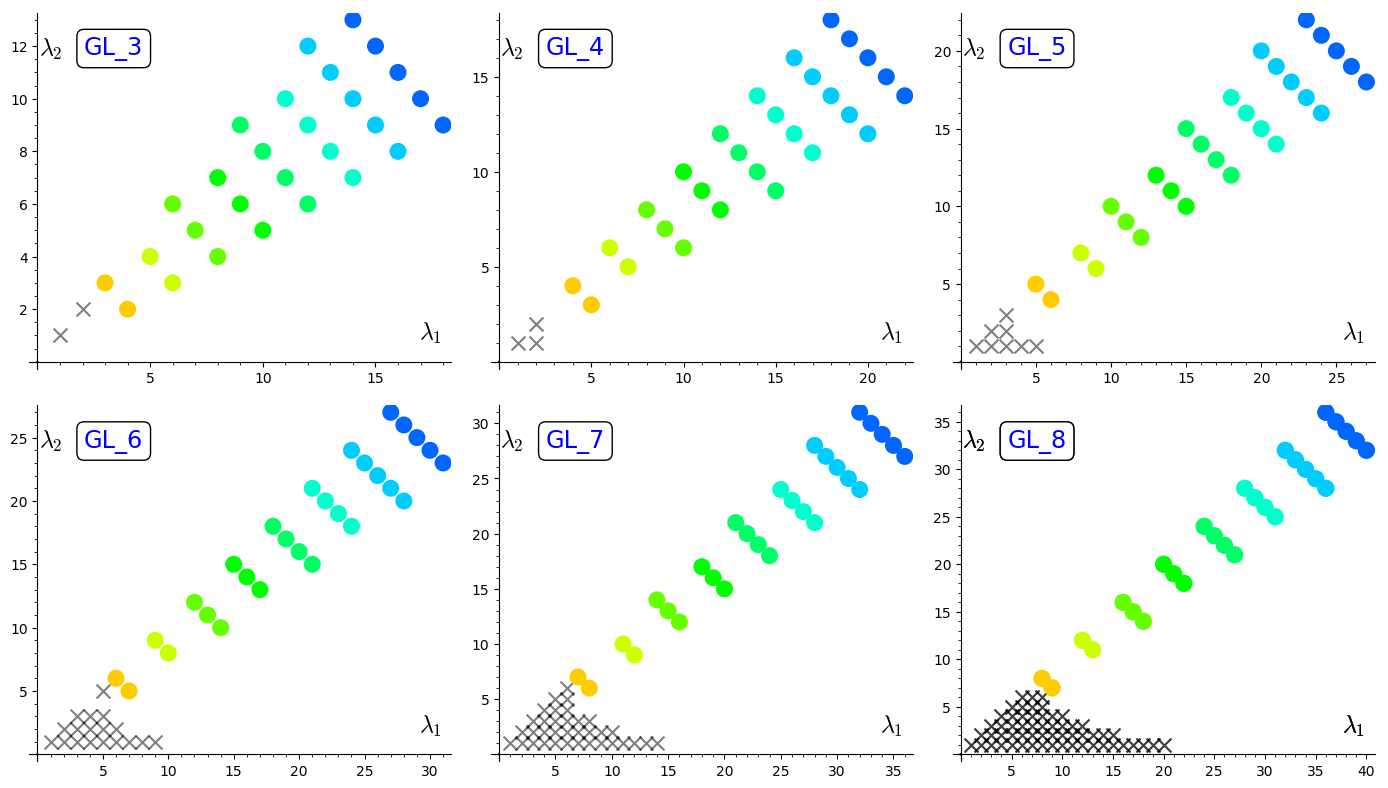}
    \caption{A curious curve emerges}
    \label{fig:curiousCurve}
\end{figure}
The colorful points represent $\la$ from our main Theorem \ref{thm:Main}, with each distinct color corresponding to a distinct choice of $m$. The phenomenon is a discrete curve (in the $\la_1, \la_2$ plane) dividing irreps of $GL_n$ which contain every symmetric group irrep from those that are missing at least one symmetric group irrep. The points marked with an X are the $GL_n$ irreps which, upon restriction to the symmetric group, fail to contain every $\mathfrak{S}_n$ irrep. As can also be seen in the figure, this discrete curve has a \textit{jump} for $GL_6$. In fact a similar jump occurs for $GL_{10}$ (not pictured). We do not know the pattern. Our results only capture the phenomenon in a very limited sense: Given a fixed irreducible representation, and a fixed color (choice of $m$) its multiplicity will be nonzero in at least one position marked by that color. Explaining the other points in these diagrams is an important topic for future research and outside the results of the current article.

\textbf{Another example of our results}: Below is a (partial) list of multiplicities of the irreps of $\mathfrak{S}_{10}$ which appear in the decomposition of the $GL_{10}$ irrep $(11,9)$ (only 12 of the 42 required numbers are listed). You'll notice the last multiplicity is zero. The irreducible $\mathfrak{S}_{10}$ representation indexed by $(1,1,1,1,1,1,1,1,1,1)$ does not occur in the decomposition.
\begin{equation*}
    4789, 25466,61323,88744,157620,\dots,676,2302,4058,2132,459,32,0
\end{equation*}
Our theorem predicts that we can find every irreducible representation of the symmetric group $\mathfrak{S}_{10}$ inside either $(11,9)$ or $(10,10)$. In fact, decomposing $(10,10)$ via the algorithm described in Section \ref{sec:algorithm} we indeed find the irrep $(1,1,1,1,1,1,1,1,1,1)$ occurring with multiplicity $1$. This is one example where our theorem finds the \textit{boundary} of the phenomenon depicted in Figure \ref{fig:curiousCurve}. Below the curve we have $GL_n$ irreps which are missing at least one $Y_n^\mu$. Above the curve we have $GL_n$ irreps using every $Y_n^\mu$. Our theorem, taking $m=2$, guarantees the appearance of every $Y_n^\mu$ in one of two $GL_n$ irreps near that boundary.

\textbf{Acknowledgement:} We would like to thank the organizers Mohammad Reza Darafsheh and Manouchehr Misaghian of the AMS Special Session on Group Representation Theory and Character Theory held January 19, 2019 at the Joint Math Meetings, where we presented this research. The authors would like to sincerely thank the two anonymous referees whose comments both improved the results and the exposition.

\section{An algorithm for branching}\label{sec:algorithm}

Branching from $GL_n$ to $\mathfrak{S}_n$ is among the class of problems which have an algorithm we can use to find the answer in any specific (finite) case, but unfortunately lacks a general description, formula, or combinatorial explanation. Already well-known is a combinatorial description in the special case of one-row diagrams (symmetric powers). For a description of this see Section \ref{sec:OneRow}. The results of this paper are therefore a step towards the next case: two-row diagrams $(\lambda_1,\lambda_2)$. We now give a brief description of the algorithm which, given any specific irrep, will output its decomposition.

\textbf{Algorithm}: We can decompose representations of the general linear group into irreps of the symmetric group by the following (roughly sketched) algorithm. The input is a symmetric function corresponding to the character of a fixed $GL_n$ representation. The output is a list of multiplicities for each irrep of $\mathfrak{S}_n$. The permutation matrices are a subgroup of $GL_n$ and, diagonalized, they have certain eigenvalues (roots of unity) corresponding to their cycle type. Diagonalizable elements are dense in $GL_n$ and so we know the character of an irreducible representation of the general linear group is given by evaluating a Schur function in $n$ variables corresponding to parameters of the maximal torus inside $GL_n$. Replacing these variables with the corresponding eigenvalues (of correct multiplicities) for a permutation matrix of each cycle type, we create the trace of the operator of an element of the symmetric group acting on that same vector space (the representation of $GL_n$ whose character we've taken). By doing this over all possible cycle types, we find the character viewed as a representation of the symmetric group. By taking the inner product with irreducible characters of the symmetric group we can find the multiplicities of each irreducible representation of $\mathfrak{S}_n$ inside the original $GL_n$ representation.

\section{Some motivation}\label{sec:motivation}
There are many reasons to study the decomposition of $GL_n$ representations under restriction to the symmetric group $\mathfrak{S}_n$. In this section we briefly present one reason, although we believe there are reasons yet to be discovered as well.

Repeatedly pressing the cosine button on your calculator is a good example of a dynamical system. Since your calculator presumably has finite memory, this is a dynamical system on a finite set. For example, Figure \ref{fig:ThreeElementSystems} provides a list of all 7 dynamical systems on a 3 element set.
\begin{figure}[h]
    \centering
    \includegraphics[scale=0.7]{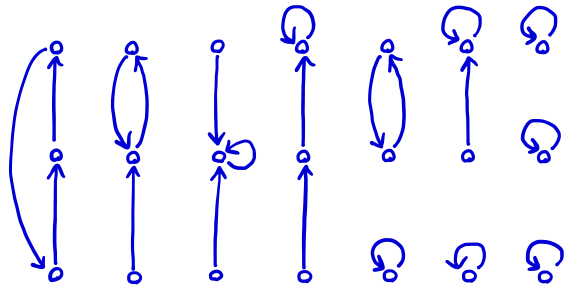}
    \caption{Dynamical systems on a three element set}
    \label{fig:ThreeElementSystems}
\end{figure}
Counting the number of such dynamical systems can (surprisingly!) be accomplished simply by summing up the appropriate branching multiplicities for the decomposition of a certain representation. We will briefly sketch this story for this particular example (counting the 7 dynamical systems listed in Figure \ref{fig:ThreeElementSystems}) although it applies to dynamical systems on any finite set.

Consider $\C^3 \otimes \C^3 \otimes \C^3$ under the action of the symmetric group $\mathfrak{S}_3$ permuting tensor factors, and one copy of $GL_3$ acting diagonally on each $\C^3$. This decomposes under Schur-Weyl duality as
\begin{equation*}
    \C^3 \otimes \C^3 \otimes \C^3 = \bigoplus F^\la \otimes Y_\la
\end{equation*}
Restricting to the permutation matrices $\mathfrak{S}_3$ sitting inside $GL_3$ the representation decomposes further with branching multiplicity coefficients we will call $b^\la_\mu$, which are of course non-negative integers and the subject of this paper. We write this as follows:
\begin{align*}
    \C^3 \otimes \C^3 \otimes \C^3 &= \bigoplus_\la F^\la \otimes Y_\la\\
     &= \bigoplus_\la \left( \bigoplus_\mu b^\la_\mu Y^\mu \right) \otimes Y_\la\\
     &= \bigoplus_{\la, \mu} b^\la_\mu \left( Y^\mu \otimes Y_\la \right)
\end{align*}
In order to find the relationship between these $b^\la_\mu$ and dynamical systems on a finite state space, first consider all functions from $\{1,2,3\}$ to $\{1,2,3\}$. Call them $X$, then there are $\abs{X} = 27=dim(\C^3 \otimes \C^3 \otimes \C^3)$ of them. In fact, consider one function $f \in X$ where $f$ sends $1 \mapsto 1, 2 \mapsto 1, 3 \mapsto 2$.
\begin{figure}[h]
    \centering
    \includegraphics[scale=0.4]{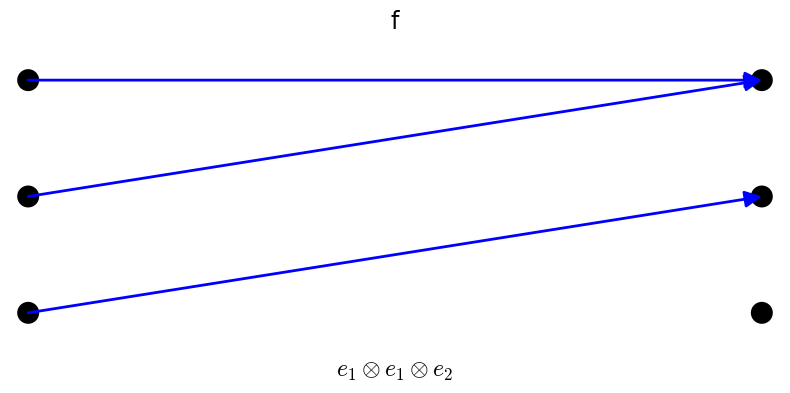}
    \label{fig:fPlot}
\end{figure}
This function corresponds to a basis element of $\C^3 \otimes \C^3 \otimes \C^3$, namely $e_1 \otimes e_1 \otimes e_2$. If $e_1,e_2,e_3$ are a basis we have chosen for $\C^3$ then a choice of one $e_i$ in each tensor factor is a choice of image $f(x) \in \{1,2,3\}$ for each element of the domain $x \in \{1,2,3\}$. The permutation matrices inside $GL_3$ are therefore acting by permuting the basis elements $e_1,e_2,e_3$ in the same way on each of the tensor factors. Thus they are permuting the choices of image. For example the permutation $\sigma=(2,3)$ would send $e_1\otimes e_1 \otimes e_2$ to $e_1 \otimes e_1 \otimes e_3$ which corresponds to an action on the function $f$, sending it to $\sigma.f$: $1 \mapsto 1, 2 \mapsto 1, 3 \mapsto 3$. The other copy of $\mathfrak{S}_3$ which is acting by permuting tensor factors (rather than sitting inside the $GL_3$) acts differently, simply by permuting the domain. For example, $\sigma$ acts by sending $e_1\otimes e_1 \otimes e_2$ to $e_1 \otimes e_2 \otimes e_1$. This corresponds to an action on the function $f \mapsto \sigma..f$ where $\sigma..f(x)=f(\sigma^{-1}x)$ so that $\sigma..f$ sends $1 \mapsto 1, 2 \mapsto 2, 3 \mapsto 1$.

If we consider $\Delta \mathfrak{S}_3 \subset \mathfrak{S}_3 \times \mathfrak{S}_3$ (the diagonal subgroup: take the same group element in both factors of the direct product) acting on $X$ then $X$ splits into orbits
\begin{equation*}
    X = \mathcal{O}_1 \cup \cdots \cup \mathcal{O}_r
\end{equation*}
where each orbit corresponds to one dynamical system. Thus, if we can count the orbits, we have counted the dynamical systems on 3 points. To see this, realize that $\Delta \mathfrak{S}_3$ corresponds to letting both copies of the symmetric group act in the same way on the domain and codomain. The function from $\{1,2,3\}$ to $\{1,2,3\}$ collapses and becomes a dynamical system on 3 points. In fact, many different functions collapse to the same dynamical system, namely all functions in the same orbit of $\Delta \mathfrak{S}_3$. For example, consider Figure \ref{fig:Collapse} for a depiction of this collapse for our function $f$:
\begin{figure}[h]
    \centering
    \includegraphics[scale=0.6]{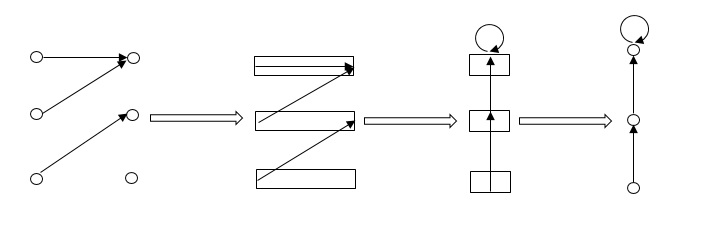}
    \caption{Our function becomes a dynamical system}
    \label{fig:Collapse}
\end{figure}
Now, how do we count orbits? Instead of using Burnside's formula which averages the number of fixed points over the group, we can also use functions on $X$ and simply count the $\Delta \mathfrak{S}_3$-fixed vectors since
\begin{align*}
    \mbox{dim } \C[X]^{\Delta \mathfrak{S}_3} &= \mbox{dim } \C[\mathcal{O}_1 \cup \cdots \cup \mathcal{O}_r ]^{\Delta \mathfrak{S}_3}\\
     &= \mbox{dim } \C[\mathcal{O}_1]^{\Delta \mathfrak{S}_3} \oplus \cdots \oplus \C[\mathcal{O}_r]^{\Delta \mathfrak{S}_3}
\end{align*}
There will be one linearly-independent $\Delta \mathfrak{S}_3$-fixed vector per orbit, namely the sum of basis elements taken to be delta functions on each point in the orbit under consideration. But we also know that
\begin{align*}
    \C[X]^{\Delta \mathfrak{S}_3} &= \left( \C^3 \otimes \C^3 \otimes \C^3 \right)^{\Delta \mathfrak{S}_3}\\
     &= \left( \bigoplus_{\la, \mu} b^\la_\mu \left( Y^\mu \otimes Y_\la \right) \right)^{\Delta \mathfrak{S}_3}\\
     &= \sum b^\la_\la.
\end{align*}
The last step above is explained by observing that the trivial representation of $\Delta \mathfrak{S}_3$ occurs exactly once in every copy of $Y^\mu \otimes Y_\la$ where $\mu=\la$ and zero times elsewhere. Since irreps of the symmetric group are self-dual, we have:
\begin{align*}
    \left( Y^\mu \otimes Y_\la \right)^{\Delta \mathfrak{S}_3} &= \left( (Y^\mu)^\ast \otimes Y_\la \right)^{\Delta \mathfrak{S}_3}\\
     &= \text{Hom} \left( Y^\mu, Y_\la \right)^{\Delta \mathfrak{S}_3}\\
     &= \text{Hom}_{\Delta \mathfrak{S}_3} \left( Y^\mu, Y_\la \right)
\end{align*}
which is clearly 0 or 1, depending on if $\mu = \la$. This shows that we can compute the number of dynamical systems simply by summing branching multiplicities $\sum b^\la_\la$.

Now we finish finding the answer 7 by adding up all possible $b^\la_\la$ for $\la \in \{ \tiny\yng(3), \tiny\yng(2,1), \tiny\yng(1,1,1) \}$. Consider first $b^{\tiny\yng(3)}_{\tiny\yng(3)}$, which corresponds to decomposing the $GL_3$ representation $F^{\tiny\yng(3)}$ of degree 3 homogeneous polynomials in 3 variables. Finding the multiplicity of the trivial representation $Y^{\tiny\yng(3)}$ of the symmetric group is the same as finding the number of linearly independent $\mathfrak{S}_3$-fixed vectors. These are clearly
\begin{equation*}
    x^3+y^3+z^3,\hspace{1cm} x^2y+x^2z+y^2x+y^2z+z^2x+z^2y, \hspace{1cm} xyz
\end{equation*}
and so the coefficient $b^{\tiny\yng(3)}_{\tiny\yng(3)}=3$.

Consider $\tiny\yng(2,1)$. As a representation of $GL_3$ this is sometimes referred to as the \textit{eightfold way} or \textit{octet} representation, since it is also a representation of the subgroup $SU(3)$ and finds application in particle physics. This representation decomposes as:
\begin{equation*}
    F^{\tiny\yng(2,1)} = \tiny\yng(3) + 3\tiny\yng(2,1) + \tiny\yng(1,1,1)
\end{equation*}
which means that $b^{\tiny\yng(2,1)}_{\tiny\yng(2,1)} = 3$. Lastly, consider $\tiny\yng(1,1,1)$. As a $GL_3$ representation this is the determinant, and upon restriction to $\mathfrak{S}_3$ we do in fact obtain the sign representation $Y^{\tiny\yng(1,1,1)}$ with multiplicity 1, so $b^{\tiny\yng(1,1,1)}_{\tiny\yng(1,1,1)} = 1$. Thus we have our result:
\begin{align*}
    b^{\tiny\yng(3)}_{\tiny\yng(3)} + b^{\tiny\yng(2,1)}_{\tiny\yng(2,1)} + b^{\tiny\yng(1,1,1)}_{\tiny\yng(1,1,1)} &= \text{Number of dynamical systems on 3 points.}\\
    3 + 3 + 1 &= 7.
\end{align*}

\section{Connecting branching with plethysm}\label{sec:ConnectBranchingPlethysm}
Here we start to prove the results of this paper. First we will show that a certain branching multiplicity will be equal to a certain plethysm multiplicity. Later, we will use this fact to re-interpret the main theorem of \cite{LhouWillenbring} in terms of the branching from $GL_n$ to $\mathfrak{S}_n$.

We regard $F^{\la}$ as a functorial operator on the category of vector spaces -- often called the \textit{Schur functor}. This point of view applies when vector spaces are infinite dimensional. For example, if $\la = (1,1,1,...,1)$ (with $j$ 1's), then $F^{\la}$ takes a vector space to its $j$-th exterior power.  This situation can be generalized to the situation where $\la$ is an arbitrary non-negative partition. In the finite dimensional case, the situation is clear: if $V$ is a vector space of dimension n, then $F^{\la}(V) \cong F^{\la}_n$.

Given a partition $\la$ with at most $n$ parts, and a partition $\mu$ of $n$, we may consider the multiplicity, denoted in short-hand by the coefficient $b^\la_\mu$, or in brackets as follows:
\begin{equation*}
    b^\la_\mu = \left[ Y^\mu_n : F^\la_n \right] = \dim \mbox{Hom}_{\mathfrak{S}_n}( Y^\mu_n , F^\la_n)
\end{equation*}
where the $F^\la_n$ is regarded as a $\mathfrak{S}_n$-representation by restricting to the permutation matrices.  These multiplicities are impossible to compute in any general way, but as we mentioned earlier there are algorithms.

We present here a way to describe $\left[ Y^\mu_n : F^\la_n \right]$ using Schur functors. That is, if $V$ is a complex vector space of dimension $n$ we will use the notation of the Schur functor, $F^\la(V)$ to denote the irreducible $GL(V)$ representation.  So, for example if $V$ is $n$-dimensional then by identifying $GL(V)$ with $GL_n$, we have $F^\la_n \equiv F^\la(V)$.  Then recall that $F^\la(V)$ can be defined for infinite dimensional $V$.

For a vector space $W$, let $Sym W$ denote the algebra of symmetric tensors on $W$,
which is a graded $GL(W)$ representation. (Recall, if $W$ is finite dimensional, $Sym W$ is isomorphic, as a ring, to the polynomial functions on $W^*$.) The following theorem is well-known, but we include a sketch of it here to aid in the exposition. In fact, this is an exercise in Stanley's book \cite[Exercise 7.74]{Stanley2} with a (different) solution \cite[Page 534]{Stanley2} sketched there as well. This equation also appears in \cite[Theorem 5.1]{Scharf-Thibon}.

\begin{proposition}\label{thm:stanleyexercise}
    Given positive integers $k$ and $n$, fix a partition $\mu$ of $n$.  Regard $Sym \C^k$ as a graded $GL_k$-representation.  Then, the (infinite dimensional) representation $F^\mu(Sym \C^k)$ decomposes into irreducible finite dimensional representations of $GL_k$ with finite multiplicities, and for any partition $\la$ with at most $k$ parts,
    \begin{equation*}
        \left[ Y^\mu_n : F^\la_n \right] = \left[ F^\la_k : F^\mu(Sym \C^k) \right].
    \end{equation*}
\end{proposition}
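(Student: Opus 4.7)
The plan is to sandwich the tensor power $(\mathrm{Sym}\,\C^k)^{\otimes n}$ between two different decompositions and read off the multiplicity identity by matching them. On one side I will use the functorial definition of the Schur functor, and on the other side I will use Howe's $(GL_k, GL_n)$-duality on a polynomial ring.

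First, I would record the Schur-functor presentation: for any (possibly infinite dimensional) $GL(V)$-module $V$, the $n$-th tensor power splits as an $\mathfrak{S}_n \times GL(V)$-module
\begin{equation*}
    V^{\otimes n} \;\cong\; \bigoplus_{|\mu|=n} F^\mu(V) \otimes Y^\mu_n,
\end{equation*}
so that $F^\mu(V) \cong \mathrm{Hom}_{\mathfrak{S}_n}(Y^\mu_n, V^{\otimes n})$. I would then specialize to $V = \mathrm{Sym}\,\C^k$ and identify
\begin{equation*}
    (\mathrm{Sym}\,\C^k)^{\otimes n} \;\cong\; \mathrm{Sym}(\C^k \otimes \C^n)
\end{equation*}
as $GL_k$-representations, using the standard algebra isomorphism $\mathrm{Sym}(A \oplus B) \cong \mathrm{Sym}(A) \otimes \mathrm{Sym}(B)$ iterated over the $n$ summands of $\C^k \otimes \C^n \cong (\C^k)^{\oplus n}$. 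Here the commuting $GL_n$-action on the right is the natural one coming from $\C^n$, and under this identification the permutation of tensor factors on the left corresponds precisely to the action of the permutation matrices $\mathfrak{S}_n \subset GL_n$ on the right.

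Next I would invoke Howe's $(GL_k, GL_n)$-duality on $\mathrm{Sym}(\C^k \otimes \C^n)$, which gives the multiplicity-free decomposition
\begin{equation*}
    \mathrm{Sym}(\C^k \otimes \C^n) \;\cong\; \bigoplus_{\la} F^\la_k \otimes F^\la_n,
\end{equation*}
the sum ranging over partitions $\la$ with at most $\min(k,n)$ parts. Combining this with the Schur-functor formula and restricting the $GL_n$-action to the subgroup $\mathfrak{S}_n$ of permutation matrices, I would compute
\begin{equation*}
    F^\mu(\mathrm{Sym}\,\C^k) \;\cong\; \mathrm{Hom}_{\mathfrak{S}_n}\!\left(Y^\mu_n, \bigoplus_{\la} F^\la_k \otimes F^\la_n\right) \;\cong\; \bigoplus_\la F^\la_k \otimes \mathrm{Hom}_{\mathfrak{S}_n}(Y^\mu_n, F^\la_n),
\end{equation*}
from which the multiplicity identity $[F^\la_k : F^\mu(\mathrm{Sym}\,\C^k)] = [Y^\mu_n : F^\la_n]$ drops out. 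Finite multiplicities are automatic: $\mathrm{Sym}(\C^k \otimes \C^n)$ is a graded $GL_k$-module with finite-dimensional graded pieces, and $F^\mu$ preserves this grading.

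The routine steps are the algebra isomorphism and the invocation of Howe duality. The one point I would need to check carefully—and which I expect to be the real subtlety—is that the $\mathfrak{S}_n$-action used in the Schur functor (permuting tensor factors of $(\mathrm{Sym}\,\C^k)^{\otimes n}$) is genuinely carried, under the identification with $\mathrm{Sym}(\C^k \otimes \C^n)$, to the restriction of the $GL_n$-action to permutation matrices. This amounts to chasing what happens to a pure tensor $v_1 \otimes \cdots \otimes v_n$ of elements of $\mathrm{Sym}\,\C^k$ through the iterated $\mathrm{Sym}(A \oplus B) \cong \mathrm{Sym} A \otimes \mathrm{Sym} B$ identifications and verifying $\mathfrak{S}_n$-equivariance, which is mechanical but must be stated explicitly for the plethysm interpretation to hold.
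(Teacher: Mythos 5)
Your proposal is correct and follows essentially the same route as the paper's own sketch: Schur--Weyl duality applied to $V=Sym\,\C^k$, the identification $(Sym\,\C^k)^{\otimes n}\cong Sym(\C^k\otimes\C^n)$, Howe duality, and branching from $GL_n$ to $\mathfrak{S}_n$, with your Hom-space extraction of the $\mu$-isotypic component being only a cosmetic repackaging of the paper's direct comparison of the two decompositions. The equivariance subtlety you flag (and the use of Schur--Weyl for the infinite-dimensional $V$) is exactly the point the paper also leaves as a sketch, handled there by noting that everything is graded with finite-dimensional pieces.
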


\begin{proof}[Sketch of proof]
Let $V=Sym \C^k$. The tensor product of $n$ copies of $V$ may be regarded as a $GL(V) \times \mathfrak{S}_n$-representation with multiplicity free decomposition,
\begin{equation}\label{eq:1}
    \underbrace{V \ot \cdots \ot V}_{\mbox{$n$ copies.}} = \bigoplus F^\mu(V) \ot Y^\mu_n
\end{equation}
where the sum is over all partitions, $\mu$, of size $n$ (by Schur-Weyl duality\footnote{The concern here is that $V$ is infinite dimensional.  The reason that this proof is only a sketch is because of this technical point, however, it is enough because all the representations considered in this paper are graded finite-dimensional. For a very careful exposition of the foundations of plethysm we recommend the paper by Loehr and Remmel \cite{LoehrRemmel}.} applied to $V$). On one hand, we can restrict from $GL(V)$ to $GL_k$, which involves decomposing the $F^\mu(V)$ into irreducible representations, $F^{\la}_k$, of $GL_k$. On the other hand, observe that,
\begin{equation*}
    V \ot \cdots \ot V \cong Sym( \bbC^k \otimes \bbC^n ).
\end{equation*}
The right hand side carries an action of $GL_k \times GL_n$,
which by Howe duality decomposes as
\begin{equation*}
    Sym(\bbC^k \ot \bbC^n) \cong \bigoplus F^{\la}_k \ot F^{\la}_n
\end{equation*}
where the sum ranges over all partitions $\la$ with at most $\min(k,n)$ parts. We then branch from the right-hand $GL_n$ to $\mathfrak{S}_n$ to obtain
\begin{equation*}
    Sym(\bbC^k \ot \bbC^n) = \bigoplus_\la  F^{\la}_k \ot
    \left( \bigoplus_\mu \left[ Y^\mu_n : F^\la_n \right] Y^{\mu}_n \right).
\end{equation*}
Reorganizing we have
\begin{equation}\label{eq:2}
    Sym(\bbC^k \ot \bbC^n) = \bigoplus_\mu
        \left(\bigoplus_\la \left[ Y^\mu_n : F^\la_n \right] F^{\la}_k \right) \ot Y^{\mu}_n.
\end{equation}
Compare the decompositions \ref{eq:1} and \ref{eq:2}.
\end{proof}

\section{Branching for symmetric powers}\label{sec:OneRow}
In the previous section we saw that finding certain plethysm multiplicities was equivalent to finding certain branching multiplicities.
\begin{equation*}
     \left[F^\la_k : F^\mu(Sym \C^k)\right] = \left[Y^\mu_n : F^\la_n\right]
\end{equation*}
In this section we will consider what is already known in the literature (for example in \cite{Stanley2}) addressing the case when $k=l(\la)=1$. In this case, we will see that the branching multiplicities are already known for any irrep of $GL_n$ given by $F^\la_n$ where $\la=(d,0,0,\dots) = (d)$. These irreps correspond to symmetric powers of the defining representation of $GL_n$, denoted $Sym^d \C^n$. 

As an example, consider decomposing irreps of $GL_4$ into a direct sum of irreps of $\mathfrak{S}_4$. But as described above, only decompose the irreps of $GL_4$ given by $F^{(d)}_4$, which are equivalent to the $d$th symmetric power of $\C^4$. We have the symmetric group on four letters $\mathfrak{S}_4$ and its irreducible representations $Y^\mu_4$ where $\mu$ is from the set
\begin{equation*}
    \mu \in \left\{ (4), (3,1), (2,2), (2,1,1), (1,1,1,1) \right\}
\end{equation*}
Consider the first few. How do they decompose into irreps of $\mathfrak{S}_4$?
\begin{align*}
    Sym^0 \C^4 = F^{(0,0,0,0)}_4 &= Y_4^{\tiny\yng(4)}\\
    Sym^1 \C^4 = F^{(1,0,0,0)}_4 &= Y_4^{\tiny\yng(4)} \oplus Y_4^{\tiny\yng(3,1)}\\
    Sym^2 \C^4 = F^{(2,0,0,0)}_4 &= 2Y_4^{\tiny\yng(4)} \oplus 2Y_4^{\tiny\yng(3,1)} \oplus Y_4^{\tiny\yng(2,2)}\\
    Sym^3 \C^4 = F^{(3,0,0,0)}_4 &= 3Y_4^{\tiny\yng(4)} \oplus 4Y_4^{\tiny\yng(3,1)} \oplus Y_4^{\tiny\yng(2,2)} \oplus Y_4^{\tiny\yng(2,1,1)}
\end{align*}
These results can be obtained using simple combinatorial rules. The multiplicity of a given $Y^\mu_4$ inside $Sym^d \C^4$ is given by the number of semi-standard tableaux (weakly increasing along the row and strictly increasing down the column) with total weight summing to $d$. For example, with $Sym^2 \C^4$ we count the semi-standard tableaux of with total weight $2$ for each shape $\mu$. These are
\begin{equation*}
    \young(0002), \young(0011), \young(001,1), \young(000,2), \young(00,11).
\end{equation*}
This gives us the decomposition of $F^{(2)}_4$ into irreps of $\mathfrak{S}_4$. Of course, these rules only apply to the one-row irreps $F^\la_n$ where $l(\la) \leq 1$. There are of course many more irreps of $GL_n$ whose branching decompositions are unknown in general. In this paper, we obtain results pertaining to the next simplest case, the irreps of $GL_n$ given as $F^\la_n$ for $l(\la) \leq 2$, which is just another way of saying we will look at two-row shapes.

\textbf{Remark:} These results explain the location of the last X on the $\lambda_1$ axis in Figure \ref{fig:curiousCurve}.  The multiplicity of $Y_n^{\mu}$ in $F_n^{(d)}$ is nonzero for all $\mu$ if and only if $d \geq \binom{n}{2}$, so the X's stop there.

\color{black}
\textbf{Remark:} In \cite[page 475]{Stanley2} the formula expressing $\left[Y^\mu_n : F^{(d)}_n\right]$ can be found by examining the coefficients of the identity
\begin{equation*}
    \sum_{d \geq 0} (\text{ch } \Psi^d) q^d = \sum_{\mu \vdash n} s_\mu (1, q, q^2, \dots ) s_\mu
\end{equation*}
where
\begin{equation*}
    s_\mu (1, q, q^2, \dots ) = \frac{q^{b(\mu)}}{\prod_{u \in \mu} [h(u)]}.
\end{equation*}
More details are contained in \cite{Stanley2} but briefly, $\Psi^d$ is the character of the action of the symmetric group induced on degree $d$ homogeneous forms on an $n$-dimensional vector space. As for the other pieces of notation, $b(\mu) = \sum (i-1) \mu_i$ and $u \in \mu$ means that we are identifying $\mu$ with its diagram $\{ (i,j) \, : \, 1 \leq j \leq \mu_i \}$ so that $u$ is a square whose hook length $h(u) = \mu_i + \mu_j^{'} - i - j + 1$. Finally $[k] = 1 - q^k$.
\color{black}

\section{Using an existence result on the plethysm side}\label{sec:finishProof}

In the previous section we saw the results for one-row diagrams, what we also call the $k=\ell(\la)=1$ case, already known in the literature. This dealt with decomposing certain irreps $F^\la_n$ of $GL_n$ into irreps $Y^\mu_n$ of $\mathfrak{S}_n$ in the case that $\la=(d,0,0,\dots)$. In Section \ref{sec:ConnectBranchingPlethysm} we saw that finding the multiplicity of $F^\la_k$ inside $F^\mu(Sym \C^k)$ (plethysm) was equivalent to finding the multiplicity of $Y^\mu_n$ inside $F^\la_n$ (branching). In this section we will apply the main theorem from \cite{LhouWillenbring} on the plethysm side to guarantee non-zero multiplicity of certain irreps when $k=l(\la)=2$. Thus we will also have guaranteed non-zero multiplicity of certain branching multiplicities as well.

\begin{lemma}
    Let $Sym^m \C^k$ be the degree $m$ symmetric tensors on $\C^k$. Applying the Schur functor, we regard $F^\mu(Sym^m \C^k)$ as a $GL_k$-representation. If the multiplicity of $F^\la_k$ in $F^\mu(Sym^m \C^k)$ is non-zero, then its multiplicity in $F^\mu(Sym \C^k)$ is non-zero as well.
\end{lemma}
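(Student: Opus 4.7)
The plan is to exploit the naturality of the Schur functor. Recall that $Sym\,\C^k = \bigoplus_{d \geq 0} Sym^d\,\C^k$ is a graded $GL_k$-module, so the degree-$m$ component gives a $GL_k$-equivariant split inclusion $\iota : Sym^m\,\C^k \hookrightarrow Sym\,\C^k$, with $GL_k$-equivariant retraction $\pi : Sym\,\C^k \to Sym^m\,\C^k$ (projection onto the degree $m$ summand) satisfying $\pi \circ \iota = \mathrm{id}$. Applying the Schur functor $F^\mu$ produces $GL_k$-equivariant maps $F^\mu(\iota)$ and $F^\mu(\pi)$, and the entire argument reduces to observing that $F^\mu(\iota)$ is still a split injection.

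The key step is that the Schur functor preserves inclusions. I would verify this by realizing $F^\mu(V)$ as the $Y^\mu$-isotypic component of $V^{\otimes n}$ (with $n = |\mu|$) under the permutation action of $\mathfrak{S}_n$, e.g.\ as $c_\mu \cdot V^{\otimes n}$ for a Young symmetrizer $c_\mu$, or equivalently as $\Hom_{\mathfrak{S}_n}(Y^\mu, V^{\otimes n}) \otimes Y^\mu$. An inclusion $V \hookrightarrow W$ induces an $\mathfrak{S}_n$-equivariant inclusion $V^{\otimes n} \hookrightarrow W^{\otimes n}$; since $\C \mathfrak{S}_n$ is semisimple in characteristic zero, extracting the $Y^\mu$-isotypic component is exact, so $F^\mu(\iota)$ is injective. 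In fact, functoriality applied to $\pi \circ \iota = \mathrm{id}$ immediately gives $F^\mu(\pi) \circ F^\mu(\iota) = \mathrm{id}$, so $F^\mu(Sym^m\,\C^k)$ sits inside $F^\mu(Sym\,\C^k)$ as a $GL_k$-invariant direct summand.

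Once this summand structure is in hand, the conclusion is routine: both $F^\mu(Sym^m\,\C^k)$ and $F^\mu(Sym\,\C^k)$ decompose into irreducibles $F^\la_k$ with finite multiplicities (for the ambient module this is exactly the finiteness asserted in Proposition \ref{thm:stanleyexercise}), and the multiplicity of any fixed $F^\la_k$ in a direct summand cannot exceed its multiplicity in the whole module. Thus a nonzero multiplicity of $F^\la_k$ in $F^\mu(Sym^m\,\C^k)$ forces a nonzero multiplicity of $F^\la_k$ in $F^\mu(Sym\,\C^k)$, as claimed.

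The only real subtlety to address is the infinite-dimensionality of $Sym\,\C^k$, since naturality of Schur functors is most cleanly stated for finite-dimensional spaces. This is the same technical point flagged in the footnote to Proposition \ref{thm:stanleyexercise}. It causes no actual obstruction here: one may either argue graded piece by graded piece, or truncate to $\bigoplus_{d \leq N} Sym^d\,\C^k$ for any $N \geq m$ large enough that the images of the relevant copies of $F^\la_k$ are already captured, and then apply the finite-dimensional functoriality of $F^\mu$ to that truncation. Either route gives the split inclusion needed above.
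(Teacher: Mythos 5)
Your proof is correct, but it takes a different route from the paper. You establish the key containment $F^\mu(Sym^m \C^k) \subseteq F^\mu(Sym\,\C^k)$ by pure functoriality: the graded projection gives a split $GL_k$-equivariant inclusion $\iota$, and applying $F^\mu$ (realized inside the $n$-fold tensor power via a Young symmetrizer, with injectivity preserved because $\C\mathfrak{S}_n$ is semisimple) yields $F^\mu(\pi)\circ F^\mu(\iota)=\mathrm{id}$, so the small space is a $GL_k$-direct summand and multiplicities can only grow. The paper instead justifies the same containment by branching $F^\mu$ of a direct sum to the product of general linear groups of the summands, i.e.\ it writes
\begin{equation*}
F^\mu\Bigl(\bigoplus_i Sym^i\,\C^k\Bigr) \cong \bigoplus_{\vec{\mu}}\, c^\mu_{\vec{\mu}}\; F^{\mu_1}(Sym^0\C^k)\otimes\cdots\otimes F^{\mu_r}(Sym^{r-1}\C^k)
\end{equation*}
with generalized Littlewood--Richardson coefficients, and then observes that the summand with $\mu$ placed in the $m$th slot and the empty partition elsewhere occurs with nonzero coefficient, exhibiting $F^\mu(Sym^m\C^k)$ inside $F^\mu(Sym\,\C^k)$. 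The two arguments buy slightly different things: yours is more elementary and self-contained (no Littlewood--Richardson combinatorics), makes the summand structure explicit via the retraction, and you address the infinite-dimensionality head-on (your graded-piece/truncation fix works, since the copy of $F^\la_k$ produced from $F^\mu(Sym^m\C^k)$ lives in the degree $nm$ graded component, which only involves $Sym^d\C^k$ for $d\le nm$); the paper's approach, at the cost of invoking the LR branching rule, identifies the full complement of your summand, which situates the embedding within the complete decomposition of $F^\mu(Sym\,\C^k)$. Either argument suffices for the multiplicity comparison, which in both cases reduces to complete reducibility of the (graded, finite-dimensional) $GL_k$-pieces.
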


\begin{proof}
Since we have the injection $Sym^m \C^k \xhookrightarrow{} Sym \C^k$, we also have the injection $F^\mu(Sym^m \C^k) \xhookrightarrow{} F^\mu(Sym \C^k)$. This follows by considering generalized Littlewood-Richardson coefficients. The notation gets a bit trickier here, but briefly, when we decompose an irrep of $GL(V_1 \oplus V_2)$ into irreducibles under the action of a subgroup $GL(V_1) \times GL(V_2)$, the multiplicities that show up are Littlewood-Richardson coefficients
\begin{equation*}
    F^\mu(V_1 \oplus V_2) = \bigoplus c^\mu_{\lambda \nu} F^\lambda(V_1) \otimes F^\nu(V_2).
\end{equation*}
Replacing $V_1 \oplus V_2$ by the direct sum $\bigoplus_{1,\dots,r} Sym^i(V)$ in the left-hand side we obtain an updated right-hand side
\begin{equation*}
    \bigoplus c^\mu_{\Vec{\mu}} \big( F^{\mu_1}(Sym^0(V)) \otimes \cdots \otimes F^{\mu_r}(Sym^{r-1}(V)) \big)
\end{equation*}
where the sum is over all tuples of partitions $\Vec{\mu} = (\mu_1,\mu_2,\dots ,\mu_r)$ where, somewhat confusingly, each $\mu_i$ is now a partition (rather than a natural number). Taking the trivial representation in all tensor factors except the $m$th, where we take $\mu_m=\mu$, we get one of the terms in this direct sum. Since we are assuming $F_k^{\la}$ appears with non-zero multiplicity in $F^\mu(Sym^m \C^k)$, and since Littlewood-Richardson coefficients also describe tensor product multiplicities, that particular coefficient is non-zero, and our result follows.
\end{proof}

From here onwards, set $k=2$. This allows us to look at the finite-dimensional representation $F^\mu(Sym^m \C^2)$ for any $m \in \mathbb{N}$. Thus for every choice of $m \in \mathbb{N}$ we can hope for results to translate back to branching multiplicities.

\begin{lemma}\label{lem:center}
    Fix any partition $\mu$ of size $n$ and any $m \in \mathbb{N}$. Given the $GL_2$ representation $F^\mu(Sym^m \C^2)$, the center of $GL_2$ acts by
    \begin{equation*}
        v \longmapsto z^{nm}v
    \end{equation*}
    where $v \in F^\mu(Sym^m \C^2)$, $z \in \C^\times$, and $diag(z,z) \in Z(GL_2)$.
\end{lemma}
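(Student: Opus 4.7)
The plan is to compute the action of the center by factoring it through two natural steps: first how the scalar matrix acts on the intermediate vector space $\text{Sym}^m \C^2$, and then how that scalar action propagates through the Schur functor $F^\mu$.

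First I would record the observation that the central element $\text{diag}(z,z) \in GL_2$ acts on the defining representation $\C^2$ by the scalar $z$, and therefore acts on the $m$-th symmetric power $\text{Sym}^m \C^2$ by the scalar $z^m$. This is because $\text{Sym}^m$ is a degree-$m$ polynomial functor: every monomial in a basis of $\text{Sym}^m \C^2$ is a product of $m$ factors from $\C^2$, and each factor is scaled by $z$.

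Next I would invoke the construction of the Schur functor. For a partition $\mu$ of size $n$ and any (possibly infinite dimensional) vector space $W$, the space $F^\mu(W)$ can be realized as the image of a Young symmetrizer acting on $W^{\otimes n}$, hence as a $GL(W)$-stable subspace (or direct summand) of $W^{\otimes n}$. The action of $GL(W)$ on $W^{\otimes n}$ is diagonal: $g \cdot (w_1 \otimes \cdots \otimes w_n) = gw_1 \otimes \cdots \otimes gw_n$. Consequently, if $g$ acts on $W$ by a scalar $c$, then $g$ acts on $W^{\otimes n}$ by $c^n$, and this scalar action restricts to the same scalar $c^n$ on the invariant subspace $F^\mu(W)$.

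Applying this with $W = \text{Sym}^m \C^2$ and $c = z^m$, the element $\text{diag}(z,z)$ acts on $F^\mu(\text{Sym}^m \C^2)$ by the scalar $(z^m)^n = z^{nm}$, which is precisely the claim. I do not anticipate a serious obstacle; the only point requiring a word of care is that $\text{Sym}^m \C^2$ is finite dimensional, so the Schur functor is applied to a finite dimensional space and we may use the standard Young symmetrizer realization without the technical subtleties that occur in the infinite dimensional setting referenced in the footnote to Theorem \ref{thm:stanleyexercise}.
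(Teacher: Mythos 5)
Your argument is correct. The paper disposes of this lemma in one line by invoking homogeneity of the Schur function, $s_\mu(t\Vec{x}) = t^{\abs{\mu}} s_\mu(\Vec{x})$: the central element acts on $Sym^m \C^2$ with all eigenvalues equal to $z^m$, and since the character of $F^\mu(W)$ is a homogeneous symmetric polynomial of degree $\abs{\mu}=n$ in the eigenvalues on $W$, the scalar picked up is $(z^m)^n = z^{nm}$. You prove the same homogeneity fact at the level of modules rather than characters: you realize $F^\mu(W)$ as the image of a Young symmetrizer inside $W^{\otimes n}$, observe that a group element acting on $W$ by the scalar $c$ acts on $W^{\otimes n}$ (and hence on the stable summand $F^\mu(W)$) by $c^n$, and then take $c = z^m$. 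The two routes are essentially equivalent in substance --- both come down to the Schur functor being a polynomial functor homogeneous of degree $n$ --- but yours has the small advantage of making the scalar action visible on vectors directly (rather than deducing it from the character together with the fact that a central element acts by a scalar on each irreducible constituent), and your closing remark that $Sym^m \C^2$ is finite dimensional, so no infinite-dimensional subtleties arise, is a fair and accurate observation.
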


\begin{proof}
    This follows by homogeneity of the Schur function, $s_\la(t\Vec{x}) = t^{\abs{\la}}s_\la(\Vec{x})$.
\end{proof}

\begin{lemma}
    Fix $\abs{\mu}=n$ and $m \in \mathbb{N}$. If $F^\la_2$ has non-zero multiplicity inside $F^\mu(Sym^m \C^2)$ then
    \begin{equation*}
        \la = (d+p,p) \mbox{ where } 2p+d=nm \mbox{ for some } p,d \in \mathbb{N}.
    \end{equation*}
\end{lemma}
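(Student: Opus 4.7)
The plan is to compare, in two different ways, how the center $Z(GL_2)$ of $GL_2$ acts on a copy of $F^\lambda_2$ sitting inside $F^\mu(Sym^m \C^2)$. Since the center acts by a scalar on any irreducible representation (by Schur's lemma), we can read off a single integer from each side, and equality of these integers will force the claimed shape of $\lambda$.

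First I would recall that for any partition $\lambda = (\lambda_1, \lambda_2)$ with $\lambda_1 \geq \lambda_2 \geq 0$, the diagonal element $\text{diag}(z,z) \in Z(GL_2)$ acts on the irreducible $GL_2$ representation $F^\lambda_2$ by the scalar $z^{\lambda_1 + \lambda_2}$. This is immediate from the fact that every weight of $F^\lambda_2$ has coordinates summing to $|\lambda| = \lambda_1 + \lambda_2$, which can be seen from the character $s_\lambda(x,y)$ being homogeneous of degree $|\lambda|$.

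Next, by the preceding Lemma \ref{lem:center}, the same central element acts on all of $F^\mu(Sym^m \C^2)$ by the scalar $z^{nm}$. If $F^\lambda_2$ embeds as a subrepresentation of $F^\mu(Sym^m \C^2)$ with nonzero multiplicity, then the two scalars must agree for all $z \in \C^\times$, so
\begin{equation*}
    \lambda_1 + \lambda_2 = nm.
\end{equation*}
Setting $p := \lambda_2 \in \mathbb{N}$ and $d := \lambda_1 - \lambda_2 \in \mathbb{N}$ (which is non-negative since $\lambda$ is a partition) gives $\lambda = (d+p, p)$ with $2p + d = nm$, as claimed.

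There is no real obstacle here; the entire argument reduces to matching two central characters, and the previous two results (Proposition \ref{thm:stanleyexercise} set aside, and Lemma \ref{lem:center} in particular) do the work. The only point that requires any care is verifying that $Z(GL_2)$ really acts by $z^{\lambda_1+\lambda_2}$ on $F^\lambda_2$, but this is standard and follows from homogeneity of the Schur polynomial in exactly the same way as Lemma \ref{lem:center}.
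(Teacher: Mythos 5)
Your proof is correct and follows essentially the same route as the paper: both arguments compare the central character of $F^\la_2$ (which the paper writes in the equivalent form $\mbox{det}^p \ot Sym^d \C^2$ with the center acting by $z^{2p+d}$, and you write as $z^{\la_1+\la_2}$ via homogeneity of $s_\la$) with the scalar $z^{nm}$ from Lemma \ref{lem:center}. The reparametrization $p=\la_2$, $d=\la_1-\la_2$ is exactly the identification the paper uses implicitly, so there is no substantive difference.
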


\begin{proof}
    Recall the irreps of $GL_2$ are given by $\mbox{det}^p \ot Sym^d \C^2$ where the center $Z(GL_2)$ acts by $z^{2p+d}$. Then by Lemma \ref{lem:center} we must have $2p+d = nm$.
\end{proof}

\begin{proposition}\label{thm:translateJebHassan}
    Choose any partition $\abs{\mu}=n$ and any $m \in \{2,3,\dots\}$. Consider all (finitely many) irreducible representations $F^\la_2$ where $\la = (d+p,p)$ for some $d \in \{0,1,2,\dots,m \}$, where $2p+d=nm$ for some $p \in \mathbb{N}$. For at least one such $\la$
    \begin{equation*}
        \left[ F^\la_2 : F^\mu(Sym^m \C^2) \right] \ne 0.
    \end{equation*}
\end{proposition}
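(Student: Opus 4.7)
The plan is to apply the main theorem of Lhou--Willenbring (the first Proposition of the introduction) to the $GL_{m+1}$-irrep obtained by viewing $F^\mu(\mathrm{Sym}^m \bbC^2)$ as a representation of $GL(\mathrm{Sym}^m \bbC^2) \cong GL_{m+1}$. The crucial observation is that the $GL_2$-action on $\mathrm{Sym}^m \bbC^2$ realizes $\mathfrak{sl}_2$ as the principal $\mathfrak{sl}_2$-subalgebra of $\mathfrak{sl}(\mathrm{Sym}^m \bbC^2) \cong \mathfrak{sl}_{m+1}$: this is precisely the definition of principal embedding given in the introduction, since $\mathrm{Sym}^m \bbC^2$ is (up to isomorphism) the unique irreducible $\mathfrak{sl}_2$-representation of dimension $m+1$.

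First, fix an ordered basis of $\mathrm{Sym}^m \bbC^2$ so that $F^\mu(\mathrm{Sym}^m \bbC^2) \cong F^\mu_{m+1}$ as $GL_{m+1}$-representations (if $\ell(\mu) > m+1$ this representation is zero, the statement is vacuous, and we may discard that case). Because the hypothesis $m \geq 2$ gives $m+1 \geq 3$, Lhou--Willenbring applies to the finite-dimensional irreducible $\mathfrak{sl}_{m+1}$-representation $F^\mu_{m+1}$ and produces some integer $d$ with $0 \leq d < m+1$, equivalently $d \in \{0,1,\ldots,m\}$, such that the $(d+1)$-dimensional $\mathfrak{s}$-irrep $F_d$ occurs in the restriction of $F^\mu(\mathrm{Sym}^m \bbC^2)$ to $\mathfrak{s}$.

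Next, decompose $F^\mu(\mathrm{Sym}^m \bbC^2)$ instead as a $GL_2$-representation,
\begin{equation*}
    F^\mu(\mathrm{Sym}^m \bbC^2) \;=\; \bigoplus_\lambda m^\mu_\lambda \, F^\lambda_2,
\end{equation*}
with multiplicities $m^\mu_\lambda \in \mathbb{N}$. Each irrep $F^\lambda_2$ with $\lambda = (a,b)$ restricts to the principal $\mathfrak{s} = \mathfrak{sl}_2$ as the single $\mathfrak{sl}_2$-module $F_{a-b}$, since the $\det^b$-factor is trivial on $\mathfrak{sl}_2$. By Lemma~\ref{lem:center} (and the preceding Lemma) any $\lambda$ with $m^\mu_\lambda > 0$ satisfies $a+b = nm$, so every such $\lambda$ has the form $(p+d',p)$ with $2p+d'=nm$, and contributes $m^\mu_\lambda$ copies of $F_{d'}$ to the $\mathfrak{s}$-decomposition of $F^\mu(\mathrm{Sym}^m \bbC^2)$.

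Matching the two viewpoints finishes the proof: the $F_d$ guaranteed by Lhou--Willenbring must originate from some $F^\lambda_2$ with positive multiplicity $m^\mu_\lambda$, and that $\lambda$ is necessarily $\lambda = (p+d,p)$ with $2p+d=nm$ for some $p \in \mathbb{N}$. Because $d \in \{0,1,\ldots,m\}$, this $\lambda$ is one of the finitely many shapes appearing in the statement of the proposition. The parity obstruction $2p+d=nm$ is not an issue because it is automatically satisfied by any $d$ that can possibly occur in a $GL_2$-decomposition. The main step that deserves care, and the only nontrivial one, is the identification of the principal $\mathfrak{sl}_2 \subseteq \mathfrak{sl}_{m+1}$ with the image of $\mathfrak{sl}_2 \hookrightarrow \mathfrak{gl}(\mathrm{Sym}^m \bbC^2)$; but this is exactly the defining property of the principal embedding recalled at the start of the paper, so no real work is required.
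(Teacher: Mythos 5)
Your argument is essentially the paper's own proof: view $F^\mu(\mathrm{Sym}^m\C^2)$ as an irreducible representation of $GL_{m+1}$ (hence of $\mathfrak{sl}_{m+1}$, with $m+1\ge 3$), invoke Lhou--Willenbring to obtain an $\mathfrak{s}$-constituent $F_d$ with $0\le d\le m$, and then use Lemma~\ref{lem:center} to force the $GL_2$-constituent containing it to be $F^{(p+d,p)}_2$ with $2p+d=nm$. If anything, you are more explicit than the paper on two points it leaves implicit: that the image of $\mathfrak{sl}_2$ in $\mathfrak{sl}(\mathrm{Sym}^m\C^2)$ is principal precisely because $\mathrm{Sym}^m\C^2$ is an irreducible $\mathfrak{sl}_2$-module, and the matching of the guaranteed $F_d$ with an actual $GL_2$-constituent of the decomposition. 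One caveat: your dismissal of the case $\ell(\mu)>m+1$ as ``vacuous'' is not accurate --- there $F^\mu(\mathrm{Sym}^m\C^2)=0$, so the conclusion, being an existence claim, fails rather than holds vacuously; the paper's statement and proof silently share this defect, so it is not a gap relative to the paper, but it should be recorded as a hypothesis ($\ell(\mu)\le m+1$) rather than discarded.
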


\begin{proof}
    We have $F^\mu(Sym^m \C^2)$ a representation of $GL_2$ via composition, but it is also an irrep of $GL_{m+1}$ since the Schur functor is being applied to an $m+1$-dimensional vector space. Restricting to $SL_{m+1}$ it is again irreducible. By the main theorem of \cite{LhouWillenbring} we are guaranteed the existence of some subrepresentation isomorphic to $Sym^d \C^2$, an irrep of $SL_2$, for some $d \in \{0,1,2,\dots,m \}$. This works provided $m\geq2$ because they find principal embeddings of $\mathfrak{sl}_2$ inside $\mathfrak{sl}_a$ for $a\geq 3$. This extends to some irrep of $GL_2$, which is given by $\mbox{det}^p \ot Sym^d \C^2$ for some $p \in \mathbb{N}$. But by Lemma \ref{lem:center} we must also have $2p+d=nm$.
\end{proof}

\begin{flushleft}
\textbf{Remark:} The parity of $nm$ and $d$ must match. Solving for $p$ we see that $p = \frac{nm-d}{2}$, but $p \in \mathbb{N}$.
\end{flushleft}

\begin{proposition}\label{thm:Main}
    Choose any irreducible representation $Y^\mu_n$ of $\mathfrak{S}_n$ and choose any $m \in \{2,3,4,\dots\}$. Consider the set of irreducible representations of $GL_n$ denoted $F^\la_n$ where $\la = (p+d,p)$, $d \in \{0,1,2,\dots,m\}, p \in \mathbb{N}$ and $2p+d=nm$. Then the multiplicity
    \begin{equation*}
        \left[ Y^\mu_n : F^\la_n \right] \ne 0
    \end{equation*}
    for at least one of the $F^\la_n$.
\end{proposition}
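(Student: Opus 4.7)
The plan is to chain together the three preceding results of this section with the plethysm/branching dictionary from Proposition \ref{thm:stanleyexercise}. Fix an irrep $Y^\mu_n$ of $\mathfrak{S}_n$ and fix $m \in \{2, 3, 4, \dots\}$. First I would set $k = 2$ in Proposition \ref{thm:stanleyexercise} to obtain the identity
\begin{equation*}
    \left[ Y^\mu_n : F^\la_n \right] = \left[ F^\la_2 : F^\mu(Sym\, \bbC^2) \right]
\end{equation*}
for every partition $\la$ with at most two parts. This converts the target statement into an existence claim about plethysm multiplicities on the $GL_2$ side, where we already have tools.

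Next, I would invoke the first lemma of this section, which guarantees that if $F^\la_2$ occurs with positive multiplicity in the finite-dimensional representation $F^\mu(Sym^m \bbC^2)$, then it also occurs in the (infinite-dimensional, but decomposable) representation $F^\mu(Sym\, \bbC^2)$. So it suffices to find $\la$ of the prescribed shape with $[F^\la_2 : F^\mu(Sym^m \bbC^2)] \neq 0$.

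This is exactly the content of Proposition \ref{thm:translateJebHassan}, which was obtained by applying the Lhou--Willenbring theorem \cite{LhouWillenbring} to the natural $GL_{m+1}$-structure on $F^\mu(Sym^m \bbC^2)$ (legitimate because $m+1 \geq 3$) and then tracking the central character via Lemma \ref{lem:center}. It produces at least one $\la = (p+d, p)$ with $d \in \{0, 1, \dots, m\}$, $p \in \mathbb{N}$, and $2p + d = nm$ for which the plethysm multiplicity is nonzero. Transporting this back through the chain of equalities yields $[Y^\mu_n : F^\la_n] \neq 0$ for the same $\la$, which is the desired conclusion.

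The only subtlety to check — and arguably the only honest obstacle — is bookkeeping: we must confirm that $\la = (p+d, p)$ has at most $n$ parts so that $F^\la_n$ is genuinely an irrep of $GL_n$, which is automatic since $\la$ has at most two parts and $n \geq \abs{\mu} \geq 1$; and we must note that Proposition \ref{thm:stanleyexercise} requires $\la$ to have at most $k = 2$ parts, which is built into the form $(p+d, p)$. Everything else is direct quotation of the preceding lemmas, so there is no new technical difficulty — the work has already been done in assembling Proposition \ref{thm:translateJebHassan}, and this theorem is essentially its branching-side translation.
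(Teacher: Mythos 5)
Your proposal is correct and follows essentially the same route as the paper: the paper's proof simply cites Theorem \ref{thm:stanleyexercise} and Theorem \ref{thm:translateJebHassan} (with the first lemma of Section \ref{sec:finishProof} implicitly supplying the passage from $F^\mu(Sym^m \C^2)$ to $F^\mu(Sym\,\C^2)$), and you have spelled out exactly that chain. Your extra bookkeeping remarks about $\la$ having at most two parts are fine but add nothing beyond the paper's intended argument.
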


\begin{proof}
    This follows from Theorem \ref{thm:stanleyexercise} and Theorem \ref{thm:translateJebHassan}.
\end{proof}

\bibliographystyle{unsrt}
\bibliography{sample}

Alexander Heaton, Max Planck Institute for Mathematics in the Sciences, Leipzig and Technische Universit\"at Berlin, Germany \\{\tt E-mail address:} alexheaton2@gmail.com, heaton@mis.mpg.de
\vspace{0.3 cm} \\
Songpon Sriwongsa, Department of Mathematics, Faculty of Science, King Mongkut's University of Technology Thonburi (KMUTT), Bangkok 10140, Thailand \\{\tt E-mail address:} songpon.sri@kmutt.ac.th
\vspace{0.3 cm} \\
Jeb F. Willenbring, Department of Mathematical Sciences, University of Wisconsin-Milwaukee, United States \\{\tt E-mail address:} jw@uwm.edu

\end{document}